\newcommand{\w}{\omega}
\newcommand{\Comp}{\mathbf{Comp}}
\newcommand{\C}{\mathbf{C}}
\newcommand{\T}{\mathbf{T}}
\newcommand{\id}{\mathrm{id}}
\newcommand{\supp}{\mathrm{supp}}
\newcommand{\II}{\mathbb I}
\newtheorem{theorem}{Theorem}
\newtheorem{lemma}{Lemma}
\newtheorem{corollary}{Corollary}
\title{On monomorphic topological functors with finite supports}
\author[T.Banakh, M.Martynenko, M.Zarichnyi]{Taras Banakh, Marta Martynenko, and
        Mykhailo Zarichnyi}
\address{Department of Mathematics, Ivan Franko National University of Lviv, Ukraine}
\email{t.o.banakh@gmail.com, martamartynenko@ukr.net, mzar@litech.lviv.ua}
\keywords{monomorphic functor, finite support, functor of finite degree}
\subjclass{18B30 \and 54B30 \and 54C55}
\begin{document}

\begin{abstract}
We prove that a monomorphic functor $F:\Comp\to\Comp$ with finite supports is epimorphic, continuous, and its maximal $\emptyset$-modification $F^\circ$ preserves intersections. This implies that a monomorphic functor $F:\Comp\to\Comp$ of finite degree $\deg F\le n$ preserves (finite-dimensional) compact ANRs if the spaces $F\emptyset$, $F^\circ\emptyset$, and $Fn$ are finite-dimensional ANRs. This improves a known result of Basmanov.
\end{abstract}
\maketitle

\section{Introduction}

In this paper we study monomorphic functors with finite supports defined on topological categories and then apply the obtained results to generalize the classical result of Basmanov on the preservation
of (finite-dimensional) compact ANRs by functors of finite degree in the category $\Comp$ of compact Hausdorff spaces and their continuous maps.

Let $\T$ denote the category whose objects are topological spaces and whose morphisms are (not necessarily continuous) functions between topological spaces.
By a {\em topological category} we understand a subcategory $\C$ of the category $\T$ such that each finite discrete topological space is an object of $\C$ and each map $f:D\to X$ from a finite discrete space to an object of the category $\C$ is a morphism of $\C$. This implies that each monomorphism of the category $\C$ is an injective function.

We say that a functor $F:\C\to\T$ defined on a topological category $\C$
\begin{itemize}
\item is {\em monomorphic} if $F$ preserves monomorphisms;
\item has {\em finite supports} (resp. {\em finite degree} $\le n$) if for each object $X$ of $\C$ and each $a\in FX$ there is a map $f:A\to X$ from a finite discrete space $A$ (of cardinality $|A|\le  n$) such that $a\in Ff(FA)$;
\item {\em preserves the empty set} if $F\emptyset=\emptyset$.
\end{itemize}

Let us observe that for each (monomorphic) functor $F:\C\to\T$ that does not preserve the empty set we can change the value of $F$ at $\emptyset$ and define a new (monomorphic) functor
$F_\circ:\C\to\T$,
$$F_\circ X=\begin{cases}FX&\mbox{if $X\ne\emptyset$},\\
\emptyset&\mbox{if $X=\emptyset$},
\end{cases}$$
which preserves the empty set. This functor $F_\circ$ is called the {\em minimal $\emptyset$-modification} of $F$.

 By an {\em $\emptyset$-modification} of a (monomorphic) functor $F:\C\to\T$ we understand a (monomorphic) functor $\tilde F:\C\to\T$ such that $\tilde FX=FX$ for each non-empty object $X$ of the category $\C$. So, the values of the functors $F$ and $\tilde F$ can differ only on the empty set.
The functor $F_\circ$ is the minimal $\emptyset$-modification of $F$ in the sense that $F_\circ$ is a subfunctor of any $\emptyset$-modification $\tilde F$ of $F$.

It turns out that the family of all $\emptyset$-modifications of a given monomorphic functor $F$ has a maximal element $F^\circ$. Below we identify a finite ordinal $n$ with the finite discrete space $\{0,\dots,n-1\}$. For $i\in 2$ let $f_i:1\to\{i\}\subset 2$ be the constant map.

\begin{theorem}\label{t1} Each monomorphic functor $F:\C\to\T$ has the maximal $\emptyset$-modification $F^\circ:\C\to\T$ assigning to $\emptyset$ the space $$F^\circ\emptyset=\{a\in F1:Ff_0(a)=Ff_1(a)\}\subset F1.$$
\end{theorem}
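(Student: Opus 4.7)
The plan is to construct $F^\circ$ explicitly and then verify in turn that it is a functor, that it is monomorphic, and that every $\emptyset$-modification of $F$ embeds into it as a subfunctor. On objects set $F^\circ X=FX$ for $X\ne\emptyset$ and $F^\circ\emptyset=\{a\in F1:Ff_0(a)=Ff_1(a)\}$ as prescribed. On morphisms between non-empty objects set $F^\circ f=Ff$. The only morphisms that require new data are the empty functions $e_X:\emptyset\to X$. For $X=\emptyset$ let $F^\circ e_\emptyset$ be the identity; for $X\ne\emptyset$ pick any map $k:1\to X$ and set $F^\circ e_X(a):=Fk(a)$ for $a\in F^\circ\emptyset$.

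The first point is independence of $k$. Given two maps $k_0,k_1:1\to X$, define $j:2\to X$ by $j(i)=k_i(0)$; then $k_i=j\circ f_i$, so $Fk_i(a)=Fj(Ff_i(a))$, and the two values agree by the defining condition $Ff_0(a)=Ff_1(a)$ for $a\in F^\circ\emptyset$. Functoriality is now routine: for a composable pair $e_X:\emptyset\to X$ and $g:X\to Y$ with $X\ne\emptyset$, any witness $k:1\to X$ makes $g\circ k:1\to Y$ a witness for $F^\circ(g\circ e_X)$, so $F^\circ(g\circ e_X)=F(g\circ k)=Fg\circ Fk=F^\circ g\circ F^\circ e_X$; the remaining cases only involve $e_\emptyset=\mathrm{id}_\emptyset$ and are trivial. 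Monomorphicity on morphisms between non-empty objects is inherited from $F$; for $e_X:\emptyset\to X$ with $X\ne\emptyset$, the chosen $k:1\to X$ is injective, so $Fk$ is injective by monomorphicity of $F$, and hence its restriction $F^\circ e_X$ to $F^\circ\emptyset$ is injective.

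It remains to establish maximality. Let $\tilde F$ be an arbitrary $\emptyset$-modification of $F$. Because $\tilde F$ is monomorphic and the empty function $e_1:\emptyset\to 1$ is injective, the map $\tilde Fe_1:\tilde F\emptyset\to\tilde F1=F1$ is injective. The equality $f_0\circ e_1=f_1\circ e_1:\emptyset\to 2$ together with $\tilde Ff_i=Ff_i$ yields $Ff_0\circ\tilde Fe_1=Ff_1\circ\tilde Fe_1$, which forces $\tilde Fe_1(\tilde F\emptyset)\subseteq F^\circ\emptyset$. Combined with the identity on non-empty objects this gives a componentwise injective natural transformation $\tilde F\to F^\circ$: naturality at a morphism $e_X$ with $X\ne\emptyset$ follows from the factorisation $e_X=k\circ e_1$, which gives $\tilde Fe_X=Fk\circ\tilde Fe_1=F^\circ e_X\circ\tilde Fe_1$, while naturality between non-empty objects is trivial. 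I expect the main obstacle to be purely bookkeeping — keeping track of which morphisms are empty and which auxiliary map $1\to X$ is being used — since the genuine mathematical content reduces to the single commutativity square encoding $f_0\circ e_1=f_1\circ e_1$.
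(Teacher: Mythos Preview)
Your proof is correct and follows essentially the same route as the paper's: define $F^\circ e_X$ via an arbitrary $k:1\to X$, prove independence of the choice by factoring two such maps through a single $j:2\to X$, and derive maximality from the equality $f_0\circ e_1=f_1\circ e_1$. You are in fact slightly more careful than the paper, which leaves the explicit verification of functoriality, of the injectivity of $F^\circ e_X$, and of the naturality of the embedding $\tilde F\hookrightarrow F^\circ$ to the reader.
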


\begin{proof} In the formulation we have defined the action of the functor $F^\circ$ on the empty set. For each non-empty space $X$ in $\C$ we put $F^\circ X=FX$.

Now we define the action of $F^\circ$ on morphisms.
Let $f:X\to Y$ be a morphism of the category $\C$.
If $X$ is not empty, then so is $Y$ and we put $F^\circ f=Ff$. If $X=\emptyset=Y$,  then $F^\circ f$ is the identity map of the space $F^\circ\emptyset$. If $X=\emptyset$ and $Y\ne\emptyset$, then we put $$F^\circ f=Fg|F^\circ\emptyset:F^\circ\emptyset\to F^\circ Y=FY$$ where $g:1\to Y$ is any map.

Let us check that the morphism $F^\circ f$ is well-defined, i.e., it does not depend on the choice of the map $g:1\to Y$. Indeed, given another map $g':1\to Y$, consider the map $h:2\to Y$ defined by $h(0)=g(0)$ and $h(1)=g'(0)$. It follows that $g=h\circ f_0$ and $g'=h\circ f_1$ and then for any $a\in F^\circ\emptyset$
$$Fg(a)=F(h\circ f_0)(a)=Fh\circ Ff_0(a)=Fh\circ Ff_1(a)=F(h\circ f_1)(a)=Fg'(a).$$

This argument also implies that $F^\circ(f\circ g)=F^\circ f\circ F^\circ g$ for any morphisms $$\xymatrix@1{X\ar[r]^f&Y\ar[r]^g&Z}$$ of the category $\C$. This means that $F^\circ:\C\to\T$ is a well-defined monomorphic functor. It is clear that $F^\circ$ is an $\emptyset$-modification of $F$.
\smallskip

It remains to check that $F^\circ$ is the maximal $\emptyset$-modification of $F$. We shall show that for any $\emptyset$-modification $\tilde F$ of $F$ we get $\tilde F i^\emptyset_1(\tilde F\emptyset)\subset F^\circ\emptyset\subset F1$ where $i^\emptyset_1:\emptyset\to 1$ is the unique map.  Applying the functor $\tilde F$ to the equality $f_0\circ i^\emptyset_1=f_1\circ i^\emptyset_1$  we get $\tilde Ff_0\circ \tilde F i^\emptyset_1(a)=\tilde Ff_1\circ \tilde Fi^\emptyset_1(a)$ for every $a\in\tilde F\emptyset$, which means that  $\tilde Fi^\emptyset_1(a)\in F^\circ\emptyset$ and thus $\tilde Fi^\emptyset_1(\tilde F\emptyset)\subset F^\circ\emptyset$.
\end{proof}

Now, given a functor $F:\C\to\T$ with finite supports and an object $X$ of the category $\C$, we define the support map $\supp_X:F^\circ X\to [X]^{<\w}$ into the set $[X]^{<\w}$ of finite subsets of $X$. Each finite subset $A\subset X$ will be endowed with the discrete topology. By $i^A_X:A\to X$ we denote the identity map from the finite discrete space $A$ to $X$.

 For an element $a\in F^\circ X$ the set
$$\supp_X(a)=\bigcap\{A\in[X]^{<\w}:a\in F^\circ i^A_X(F^\circ\! A)\}$$
is called the {\em support} of $a$.

The principal result of this paper is the following:

\begin{theorem}\label{support} Let $\C$  be a topological category and $F:\C\to\T$ be a monomorphic functor with finite supports. For any element $a\in F^\circ X$ the support $A=\supp_X(a)$ is a well-defined finite subset of $X$ such that $a\in F^\circ i^A_X(F^\circ A)$.
\end{theorem}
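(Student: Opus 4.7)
My plan is to reduce the theorem to the following pairwise intersection property: for any $B_1, B_2 \in \mathcal{F}(a) := \{B \in [X]^{<\w} : a \in F^\circ i^B_X(F^\circ B)\}$, one has $B_1 \cap B_2 \in \mathcal{F}(a)$. Granting this, since $F$ has finite supports, $\mathcal{F}(a)$ is non-empty; fix any $B_0 \in \mathcal{F}(a)$. Then $A = \supp_X(a) \subset B_0$, so $A$ is automatically finite. Moreover, $\{B_0 \cap B : B \in \mathcal{F}(a)\}$ is a family of subsets of the finite set $B_0$ and is hence itself finite, so $A$ can be written as a finite intersection $B_0 \cap B_1 \cap \cdots \cap B_n$ with each $B_k \in \mathcal{F}(a)$. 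Iterating the pairwise intersection property then gives $A \in \mathcal{F}(a)$, which is exactly what the theorem asserts.

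The easy case of the pairwise property, $B_1 \cap B_2 \ne \emptyset$, is handled by a retraction argument. Let $D = B_1 \cup B_2$ with inclusions $j_k : B_k \hookrightarrow D$ and $\iota : D \hookrightarrow X$, and pick $a_k \in F^\circ B_k = F B_k$ with $F i^{B_k}_X(a_k) = a$. Injectivity of $F\iota$ forces $b := F j_1(a_1) = F j_2(a_2) \in FD$. Choosing any $c_0 \in B_1 \cap B_2$, let $r : D \to B_1$ be the retraction with $r|_{B_1} = \id$ and $r(D \setminus B_1) = \{c_0\}$; this is a morphism of $\C$, being a function between finite discrete spaces. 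Then $F(r)(b) = a_1$ on the one hand, while $r \circ j_2$ factors through $B_1 \cap B_2$ on the other, so $a_1 \in Fk(F(B_1 \cap B_2))$ for $k : B_1 \cap B_2 \hookrightarrow B_1$. Composing with $i^{B_1}_X$ yields $a \in F i^{B_1 \cap B_2}_X(F(B_1 \cap B_2))$, as required.

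The main obstacle is the remaining case $B_1 \cap B_2 = \emptyset$ with $B_1, B_2$ both non-empty, since I must exhibit the witness inside $F^\circ \emptyset$ itself. I reduce it to singletons: picking $x_k \in B_k$, the enlarged set $B_1 \cup \{x_2\}$ still lies in $\mathcal{F}(a)$ (by inclusion $B_1 \hookrightarrow B_1 \cup \{x_2\}$) and meets $B_2$ in $\{x_2\}$, so the previous case gives $\{x_2\} \in \mathcal{F}(a)$, and symmetrically $\{x_1\} \in \mathcal{F}(a)$. Writing $a = F g_{x_k}(c_k)$ with $c_k \in F1$ and $g_x : 1 \to X$ the map $0 \mapsto x$, I factor $g_{x_1} = h \circ f_0$ and $g_{x_2} = h \circ f_1$ through the injection $h : 2 \to X$ sending $0 \mapsto x_1$ and $1 \mapsto x_2$. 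Injectivity of $F h$ then gives $F f_0(c_1) = F f_1(c_2)$ in $F 2$, and applying $F$ to the unique map $2 \to 1$ forces $c_1 = c_2 =: c$; hence $F f_0(c) = F f_1(c)$, so $c \in F^\circ \emptyset$ and $a = F^\circ i^\emptyset_X(c)$, establishing $\emptyset \in \mathcal{F}(a)$. The trivial sub-case where one of $B_1, B_2$ is empty is immediate from the hypothesis.
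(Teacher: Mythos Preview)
Your proof is correct and follows the same strategy as the paper (its Lemma~1 followed by Lemma~2): establish that the family $\mathcal F(a)$ is closed under pairwise intersection via a retraction argument in the non-disjoint case and via the defining property of $F^\circ\emptyset$ in the disjoint case, then iterate over a finite subfamily. Your execution is marginally more economical---a single retraction $r:D\to B_1$ in place of the paper's three compatible retractions, and bootstrapping $\{x_k\}\in\mathcal F(a)$ in the disjoint case from the already-proved non-disjoint case applied to $B_1\cup\{x_2\}$ rather than repeating a retraction argument---but the underlying ideas are identical.
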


We postpone the proof of this theorem till Section~\ref{s2}. Now we discuss an  application of Theorem~\ref{support} to functors of finite degree in the category $\Comp$ of compact Hausdorff spaces and their continuous maps.
First we recall the necessary definitions, see \cite{TZ} for more details.

A functor $F:\Comp\to\T$
\begin{itemize}
\item is {\em epimorphic} if $F$ preserves epimorphisms (which coincide with surjective maps in the categories $\Comp$ and $\T$);
\item is {\em continuous} if $F(\Comp)\subset\Comp$ and $F$ preserves the limits of inverse spectra in the category $\Comp$;
\item {\em preserves intersections} if for any compact Hausdorff space $X$ and closed subsets $X_\alpha\subset X$, $\alpha\in A$, with intersection $Z=\bigcap_{\alpha\in A}X_\alpha$, we get $Fi^Z_X(Z)=\bigcap_{\alpha\in A}Fi^{X_\alpha}_X(FX_\alpha)$.
\end{itemize}
Here for two compact Hausdorff spaces $A\subset B$ by $i^A_B:A\to B$ we denote the identity embedding.
\smallskip

Theorem~\ref{support} is a key ingredient in the proof of the following:

\begin{theorem}\label{main} Each monomorphic functor $F:\Comp\to\T$ with finite supports is epimorphic and its maximal $\emptyset$-modification $F^\circ:\Comp\to\T$ preserves intersections.
\end{theorem}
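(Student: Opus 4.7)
\emph{The plan.} Both assertions will follow from Theorem~\ref{support}: for any $a\in F^\circ X$ the support $\supp_X(a)$ is a finite subset $A\subset X$ through which $a$ factors, i.e., $a\in F^\circ i^A_X(F^\circ A)$. The strategy is to reduce questions about arbitrary compact Hausdorff spaces to questions about finite discrete subobjects, where the behaviour of $F$ is completely controlled by its action on maps between finite discrete spaces.

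\emph{Epimorphism preservation.} Let $f:X\to Y$ be a surjection in $\Comp$ and $b\in FY$. The case $Y=\emptyset$ is trivial (it forces $X=\emptyset$), so assume $Y\ne\emptyset$, whence $b\in F^\circ Y=FY$. Let $B=\supp_Y(b)$ and pick $b'\in F^\circ B$ with $F^\circ i^B_Y(b')=b$. If $B\ne\emptyset$, surjectivity of $f$ produces a section $s:B\to X$ with $f\circ s=i^B_Y$ (automatically a morphism of $\Comp$ since $B$ is finite discrete), so that $a:=Fs(b')\in FX$ satisfies $Ff(a)=Fi^B_Y(b')=b$. If $B=\emptyset$, then $b=Fg(b')$ for any $g:1\to Y$; choosing any $g':1\to X$ and setting $a:=Fg'(b')$, I get $Ff(a)=F(f\circ g')(b')=b$, where the last equality uses the argument from the proof of Theorem~\ref{t1} applied to the two maps $f\circ g'$ and $g$ from $1$ to $Y$.

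\emph{Intersection preservation.} Given closed subsets $X_\alpha\subset X$, $\alpha\in I$, with $Z=\bigcap_{\alpha\in I}X_\alpha$, the inclusion $F^\circ i^Z_X(F^\circ Z)\subset\bigcap_{\alpha}F^\circ i^{X_\alpha}_X(F^\circ X_\alpha)$ is immediate from functoriality applied to the factorizations $i^Z_X=i^{X_\alpha}_X\circ i^Z_{X_\alpha}$. For the reverse inclusion I would first establish a \emph{monotonicity lemma}: whenever $a=F^\circ i^Y_X(a')$ for a closed $Y\subset X$ and some $a'\in F^\circ Y$, then $\supp_X(a)\subset Y$. This is proved by writing $a'=F^\circ i^{S'}_Y(b')$ with $S'=\supp_Y(a')\subset Y$ (via Theorem~\ref{support}), so that $a=F^\circ i^{S'}_X(b')$ exhibits $S'$ as one of the finite sets appearing in the intersection defining $\supp_X(a)$. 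Applied to each $X_\alpha$, this yields $\supp_X(a)\subset X_\alpha$ for every $\alpha$, hence $\supp_X(a)\subset Z$. Combining this with $a\in F^\circ i^{\supp_X(a)}_X(F^\circ\supp_X(a))$ from Theorem~\ref{support} and the factorization $i^{\supp_X(a)}_X=i^Z_X\circ i^{\supp_X(a)}_Z$ puts $a$ into $F^\circ i^Z_X(F^\circ Z)$, as required.

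\emph{Main obstacle.} The delicate point throughout is the uniform treatment of the empty set: supports may be empty, some $X_\alpha$ (and hence $Z$) may be empty, and factorizations through $F^\circ\emptyset\subset F1$ must be handled via the independence-of-choice argument from Theorem~\ref{t1}. One must check that the section $s$ in the epimorphism argument and the auxiliary maps $g,g':1\to\bullet$ yield the correct values despite the nonuniqueness, and that the monotonicity lemma remains valid when $Y=\emptyset$ (where it simply says $\supp_X(a)=\emptyset$, since $\emptyset$ itself then appears in the intersection defining the support). Once these bookkeeping verifications are in place, the remainder is a routine diagram chase powered by Theorem~\ref{support}.
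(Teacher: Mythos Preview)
Your proposal is correct and follows essentially the same route as the paper. For intersection preservation your ``monotonicity lemma'' is exactly what the paper does inline: for each $\alpha$ it picks $b_\alpha\in F^\circ X_\alpha$ with $b=F^\circ i^{X_\alpha}_X(b_\alpha)$, chooses a finite $Y_\alpha\subset X_\alpha$ through which $b_\alpha$ factors, and concludes $\supp_X(b)\subset Y_\alpha\subset X_\alpha$; then Theorem~\ref{support} and the factorization through $Z$ finish the argument. The only notable difference is in the epimorphism step: the paper's Lemma~3 uses merely the finite-supports hypothesis (any finite $B\subset Y$ with $b\in Fi^B_Y(FB)$) rather than $\supp_Y(b)$, so it needs no appeal to Theorem~\ref{support} and no separate case $B=\emptyset$ --- the section $s:B\to X$ and the equation $Ff(Fs(b_B))=Fi^B_Y(b_B)=b$ work uniformly. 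Your version is still correct, just slightly less economical here.
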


For endofunctors $F:\Comp\to\Comp$ in the category of compacta we can prove a bit more:

\begin{theorem}\label{main2} For each monomorphic functor $F:\Comp\to\Comp$ with finite supports its maximal $\emptyset$-modification $F^\circ:\Comp\to\Comp$ is a monomorphic, epimorphic, continuous,  intersection preserving functor with finite supports. Moreover, the functors $F$ and $F^\circ$ preserve the weight of infinite compacta if and only if for every $n\in\w$ the space $Fn$ is metrizable.
\end{theorem}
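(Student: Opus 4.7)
The plan is to verify each listed property of $F^\circ$ in turn; most follow immediately from Theorems~\ref{t1}, \ref{support} and \ref{main}, while continuity and the weight condition require a little more care.

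\emph{Basic properties.} That $F^\circ$ lands in $\Comp$ requires only noting that $F^\circ\emptyset=\{a\in F1:Ff_0(a)=Ff_1(a)\}$ is the equalizer of two continuous maps into the Hausdorff compactum $F2$, hence closed in $F1$. Monomorphicity of $F^\circ$ is inherited from $F$ on nonempty objects; for the only remaining nontrivial case $f:\emptyset\to Y$ with $Y\ne\emptyset$, we have $F^\circ f=Fg|_{F^\circ\emptyset}$ for an injective $g:1\to Y$, and $Fg$ is injective since $F$ is monomorphic. Epimorphicity follows from Theorem~\ref{main} together with the observation that the only epimorphism with empty domain is $\id_\emptyset$, which $F^\circ$ sends to an identity. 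Intersection preservation is exactly the content of Theorem~\ref{main}, and finite supports is inherited on nonempty objects while every element of $F^\circ\emptyset$ trivially has empty support. Continuity of $F^\circ$ follows from a classical result (going back to Shchepin): a monomorphic endofunctor of $\Comp$ with finite supports that preserves intersections automatically preserves inverse limits.

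\emph{Weight preservation.} For the direction $(\Rightarrow)$, fix an infinite metric compactum $K$ (e.g.\ $K=\{0\}\cup\{1/k:k\ge 1\}$) into which every finite discrete space $n$ embeds. Monomorphicity gives $Fn\hookrightarrow FK$, and weight preservation yields $w(Fn)\le w(FK)=w(K)=\aleph_0$, so $Fn$ is metrizable. For the converse, assume each $Fn$ is metrizable. The key tool is the evaluation map $\pi_n^X:Fn\times X^n\to F^\circ X$ sending $(a,(x_1,\ldots,x_n))$ to $F^\circ g(a)$, where $g:n\to X$ is the map $i\mapsto x_i$; its continuity is a standard consequence of the continuity of $F^\circ$ (for $X$ zero-dimensional metric it follows by expressing $X$ as an inverse limit of finite discrete spaces, and for general $X$ by a further spectral decomposition). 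The image $F^\circ_n X:=\pi_n^X(Fn\times X^n)=\{a\in F^\circ X:|\supp(a)|\le n\}$ is then a continuous image of a compactum of weight $\max(w(Fn),w(X^n))=w(X)$, so its network weight is bounded by $w(X)$. Theorem~\ref{support} gives $F^\circ X=\bigcup_{n\in\w}F^\circ_n X$; since network weight is countably additive and coincides with weight in compact Hausdorff spaces, $w(F^\circ X)\le w(X)$. The same bound transfers to $F$ because $FX=F^\circ X$ for $X\ne\emptyset$.

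The main obstacle will be deriving continuity of the evaluation map $\pi_n^X$ cleanly from the inverse-limit continuity of $F^\circ$; this is where a Shchepin-style spectral argument is essential, using a representation of $X$ as a limit of simpler compacta together with the natural compatibility of $F^\circ$ with this representation. A secondary subtlety is that preservation of weight in the full equality sense $w(F^\circ X)=w(X)$ can fail for degenerate $F$ (e.g.\ constant functors), so the biconditional is most naturally read with the inequality $w(F^\circ X)\le w(X)$, which is exactly what the metrizability hypothesis on each $Fn$ delivers.
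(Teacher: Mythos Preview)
Your outline is correct and lands on the same weight estimate as the paper, but two of your technical steps are handled differently there, and the paper's route is cleaner in both places.

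\textbf{Continuity.} You deduce continuity of $F^\circ$ from a ``classical Shchepin-type'' principle that monomorphic, intersection-preserving, finite-support endofunctors of $\Comp$ are continuous. The paper instead proves continuity of $F$ itself (hence of $F^\circ$) \emph{without} using intersection preservation at all: by Lemma~\ref{l3} the functor is epimorphic, and then one invokes the Tychonoff-cube criterion (Theorem~2.2.2 of \cite{TZ}): given distinct $a,b\in F(\II^\kappa)$, pick a finite $C\subset\II^\kappa$ with $a,b\in Fi^C(FC)$, choose a finite $D\subset\kappa$ so that $p_D|_C$ is injective, and conclude $Fp_D(a)\ne Fp_D(b)$ by monomorphicity. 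This is a two-line argument once the criterion is cited, and it avoids the vaguely attributed black box you invoke.

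\textbf{Continuity of the evaluation map.} You sketch a spectral argument (zero-dimensional metric case via inverse limits of finite spaces, then a further decomposition for general $X$). The paper again replaces this by a citation: once $F$ is known to be epimorphic and continuous, Theorem~2.2.3 of \cite{TZ} gives that $F:C(n,X)\to C(Fn,FX)$ is continuous, and then the exponential law \cite[3.4.8]{En} makes $\xi_n:C(n,X)\times Fn\to FX$, $(f,a)\mapsto Ff(a)$, continuous. Since $C(n,X)\cong X^n$, this is exactly your $\pi_n^X$, but obtained without any ad hoc spectral reduction. The weight bound then follows as you wrote, via $w(F_nX)\le\max\{w(X),w(Fn)\}$ and countable additivity.

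So: same skeleton, but the paper's proof is more economical---continuity comes before and independently of intersection preservation, and the evaluation map is handled by a single reference plus the exponential law rather than a bespoke limit argument. Your observation that $F^\circ\emptyset$ is closed in $F1$ as an equalizer, and your remark on the $(\Rightarrow)$ direction via embedding $n$ into a convergent sequence, are both correct and fill in details the paper leaves implicit.
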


In \cite{Bas} V.Basmanov proved that each monomorphic continuous functor $F:\Comp\to\Comp$ of finite degree $\deg F\le n$ preserves (finite-dimensional) compact ANRs provided $F$ preserves intersections and the spaces $F\emptyset$ and $Fn$ are finite-dimensional ANRs. Theorem~\ref{main2} allows us to improve this Basmanov's result:

\begin{theorem} A monomorphic functor $F:\Comp\to \Comp$ of finite degree $\deg F\le n$ preserves (finite-dimensional) compact ANRs provided $F\emptyset$, $ F^\circ\emptyset$, and $Fn$ are finite-dimensional ANRs.
\end{theorem}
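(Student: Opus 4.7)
The plan is to reduce the claim to Basmanov's theorem, but applied not to $F$ itself but to its maximal $\emptyset$-modification $F^\circ$. By Theorem~\ref{main2}, the functor $F^\circ:\Comp\to\Comp$ is monomorphic, continuous, intersection-preserving, and has finite supports, so all of Basmanov's structural hypotheses on the functor are met except the explicit bound on the degree, which needs verification.

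To upgrade ``finite supports'' to $\deg F^\circ\le n$, I would argue as follows. For $X\ne\emptyset$ and $a\in F^\circ X=FX$, the assumption $\deg F\le n$ yields $f:B\to X$ with $|B|\le n$ and $a\in Ff(FB)$. If $B\ne\emptyset$, then $F^\circ f=Ff$ and $F^\circ B=FB$, so $a\in F^\circ f(F^\circ B)$. If $B=\emptyset$, the equality $f_0\circ i^\emptyset_1=f_1\circ i^\emptyset_1$ gives $Fi^\emptyset_1(F\emptyset)\subseteq F^\circ\emptyset$, and this combined with the formula $F^\circ i^\emptyset_X=Fg\,|\,F^\circ\emptyset$ from the proof of Theorem~\ref{t1} implies $a\in F^\circ i^\emptyset_X(F^\circ\emptyset)$, again with $|A|=0\le n$. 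For $X=\emptyset$, $a=F^\circ\id_\emptyset(a)$ trivially witnesses $A=\emptyset$. Hence $\deg F^\circ\le n$.

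I would then invoke Basmanov's theorem from \cite{Bas} for $F^\circ$. Its hypotheses are now satisfied: $F^\circ$ is monomorphic, continuous, intersection-preserving, of finite degree $\le n$; the space $F^\circ\emptyset$ is a finite-dimensional ANR by hypothesis; and $F^\circ n=Fn$ is a finite-dimensional ANR by hypothesis (assuming $n\ge 1$, as the case $n=0$ is degenerate). Therefore $F^\circ$ preserves (finite-dimensional) compact ANRs.

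Finally I would descend back to $F$. For a nonempty compact ANR $X$, $FX=F^\circ X$ is a (finite-dimensional) compact ANR by the previous step; for $X=\emptyset$, the space $F\emptyset$ is a finite-dimensional ANR by hypothesis. Thus $F$ preserves (finite-dimensional) compact ANRs, as required. There is no substantial obstacle here, since the only nontrivial inputs, namely continuity and intersection-preservation of $F^\circ$, are already packaged in Theorem~\ref{main2}; the verification of $\deg F^\circ\le n$ is the sole bit of bookkeeping.
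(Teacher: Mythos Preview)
Your proposal is correct and matches the paper's intended argument. The paper does not give a separate proof of this theorem; it presents it as an immediate consequence of Theorem~\ref{main2} and Basmanov's result, and your reduction to Basmanov applied to $F^\circ$---together with the explicit check that $\deg F^\circ\le n$ and the separate handling of $F\emptyset$---fills in precisely the details the paper leaves implicit.
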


This theorem implies the following corollary that will be applied in \cite{BH} for studying the functors of  free topological universal algebras.

\begin{corollary} A monomorphic functor $F:\Comp\to \Comp$ of finite degree $\deg F\le n$ preserves (finite-dimensional) compact ANRs provided the space $F1$ is finite and $Fn$ is a finite-dimensional ANR.
\end{corollary}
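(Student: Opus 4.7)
The plan is to reduce the corollary to the preceding theorem by verifying that its three hypotheses on $F\emptyset$, $F^\circ\emptyset$, and $Fn$ all follow from the single assumption that $F1$ is finite (together with the standing hypothesis that $Fn$ is a finite-dimensional ANR). The hypothesis on $Fn$ is given, so the whole task is to show $F\emptyset$ and $F^\circ\emptyset$ are finite-dimensional ANRs.

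First, for $F^\circ\emptyset$, I would simply invoke Theorem~\ref{t1}, according to which
$$F^\circ\emptyset=\{a\in F1:Ff_0(a)=Ff_1(a)\}\subset F1.$$
Since $F1$ is a finite space, this subspace is automatically finite. A finite Hausdorff space is discrete, and any finite discrete space is a $0$-dimensional compact ANR.

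Next, for $F\emptyset$, I would exploit monomorphicity. The unique function $i^\emptyset_1:\emptyset\to 1$ is (vacuously) injective, hence a monomorphism in $\Comp$, so $Fi^\emptyset_1:F\emptyset\to F1$ is an injective continuous map. As $F1$ is finite, this forces $F\emptyset$ to have only finitely many points; being compact Hausdorff and finite, $F\emptyset$ is discrete and therefore a $0$-dimensional ANR.

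With all three spaces $F\emptyset$, $F^\circ\emptyset$, $Fn$ established as finite-dimensional ANRs, I would conclude by a direct application of the previous theorem. There is no real obstacle here: the corollary is essentially a bookkeeping observation that the finiteness of the single small space $F1$ subsumes the two $\emptyset$-related hypotheses of the theorem via the monomorphicity of $F$ and the description of $F^\circ\emptyset$ as a subspace of $F1$.
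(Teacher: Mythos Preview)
Your proposal is correct and matches the intended derivation: the paper states the corollary without proof, simply noting that it follows from the preceding theorem, and your argument---using Theorem~\ref{t1} to place $F^\circ\emptyset$ inside $F1$ and monomorphicity to embed $F\emptyset$ into $F1$---is exactly the verification of that theorem's hypotheses.
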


\section{Proof of Theorem~\ref{support}}\label{s2}

We assume that $F:\C\to\T$ is a monomorphic functor with finite supports defined on a topological category $\C$ and $F^\circ:\C\to\T$ is its maximal $\emptyset$-modification. We recall that for a finite subset $A$ of a topological space $X$ by $i^A_X:A\to X$ we denote the identity map from $A$ endowed with the discrete topology to $X$.

Theorem~\ref{support} will be derived from the following lemma.

\begin{lemma}\label{l1} For any subsets  $A,B$ of a  finite discrete space $X$ we get $$F^\circ i^{A\cap B}_X(F^\circ(A\cap B))=F^\circ i^A_X(FA)\cap F^\circ i^B_X(FB).$$
\end{lemma}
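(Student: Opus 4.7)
The inclusion from left to right is straightforward: the identities $i^{A\cap B}_X = i^A_X\circ i^{A\cap B}_A = i^B_X\circ i^{A\cap B}_B$ in $\C$, combined with functoriality of $F^\circ$, show that any element of $F^\circ i^{A\cap B}_X(F^\circ(A\cap B))$ already lies in both $F^\circ i^A_X(FA)$ and $F^\circ i^B_X(FB)$. The content of the lemma is the reverse inclusion, which I would prove by picking $a$ in the right-hand intersection and using monomorphy of $F^\circ$ to extract a unique $\alpha\in F^\circ A$ and $\beta\in F^\circ B$ with $F^\circ i^A_X(\alpha)=F^\circ i^B_X(\beta)=a$.

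From there, I would split into cases. If $A\cap B\ne\emptyset$, fix $c_0\in A\cap B$ and define a retraction $r:X\to A$ by $r|_A=\id_A$ and $r(X\setminus A)=\{c_0\}$; this is a morphism of $\C$ because $X$ is finite and discrete. The identity $r\circ i^A_X=\id_A$ gives $Fr(a)=\alpha$, while $r\circ i^B_X$ factors as $B\to A\cap B\to A$ through the obvious retraction $r_B:B\to A\cap B$. Composing with $Fi^A_X$ then shows that $\gamma:=Fr_B(\beta)\in F(A\cap B)$ satisfies $Fi^{A\cap B}_X(\gamma)=a$, finishing this case. If $A\cap B=\emptyset$ but one of $A,B$ is empty, the corresponding preimage already lives in $F^\circ\emptyset=F^\circ(A\cap B)$ and there is nothing more to do.

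The delicate case is $A\cap B=\emptyset$ with both $A$ and $B$ non-empty. Fix $a_0\in A$, $b_0\in B$ and let $r:X\to A$ be the retraction sending $X\setminus A$ to $a_0$. The map $r\circ i^B_X:B\to A$ is constant at $a_0$, hence factors as $B\xrightarrow{c_B}1\xrightarrow{g_{a_0}}A$, so $\alpha=Fr(a)=Fr\circ Fi^B_X(\beta)=Fg_{a_0}(\tilde a)$ where $\tilde a:=Fc_B(\beta)\in F1$; the symmetric computation with $s:X\to B$ collapsing $X\setminus B$ to $b_0$ yields $\beta=Fg_{b_0}(\tilde a)$, and shows also $\tilde a=Fc_A(\alpha)$. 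I would then introduce the map $h:2\to X$ with $h(0)=a_0,\,h(1)=b_0$, which is injective because $A\cap B=\emptyset$ forces $a_0\ne b_0$. The identities $h\circ f_0=i^A_X\circ g_{a_0}$ and $h\circ f_1=i^B_X\circ g_{b_0}$ give $Fh(Ff_0(\tilde a))=a=Fh(Ff_1(\tilde a))$, and injectivity of $Fh$ (from monomorphy of $F$) forces $Ff_0(\tilde a)=Ff_1(\tilde a)$, so $\tilde a\in F^\circ\emptyset=F^\circ(A\cap B)$. Finally $F^\circ i^\emptyset_X(\tilde a)=F(h\circ f_0)(\tilde a)=a$ by the very definition of $F^\circ i^\emptyset_X$.

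The main obstacle is precisely this last case: neither $\alpha$ nor $\beta$ directly lies over $\emptyset$, so one has to build a candidate $\tilde a\in F1$ and then \emph{verify} the defining condition $Ff_0(\tilde a)=Ff_1(\tilde a)$ of $F^\circ\emptyset$. The trick is to realise $\alpha$ and $\beta$ as two pushforwards of the same element $\tilde a$ and then to compress both pushforward paths into a single injection $h:2\to X$, so that monomorphy of $F$ does the rest.
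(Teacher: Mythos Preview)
Your argument is correct and follows essentially the same route as the paper's: both proofs split into the cases $A\cap B\ne\emptyset$ versus $A\cap B=\emptyset$, use a retraction $X\to A$ (collapsing $X\setminus A$ into $A\cap B$, or to a single point $a_0$) to push the preimage down, and in the disjoint nonempty case identify the common element $\tilde a=Fr^X_1(a)\in F1$ and verify it lies in $F^\circ\emptyset$. Your treatment of that last step is in fact more explicit than the paper's---you spell out the injection $h:2\to X$ and invoke monomorphy of $Fh$ to get $Ff_0(\tilde a)=Ff_1(\tilde a)$, whereas the paper simply writes ``now we see that $c_1\in F^\circ\emptyset$'' after establishing $Ff_a(c_1)=c=Ff_b(c_1)$.
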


\begin{proof} The inclusion $F^\circ i^{A\cap B}_X(F^\circ(A\cap B))\subset F^\circ i^A_X(F^\circ A)\cap F^\circ i^B_X(F^\circ B)$ follows from the functoriality of $F^\circ$. To prove the reverse inclusion, we consider 4 cases.
\smallskip

1. If $A\subset B$, then $i^A_X=i^B_X\circ i^A_B$ and then $F^\circ i^A_X(F^\circ A)=F^\circ i^B_X\circ F^\circ i^A_B(F^\circ A)\subset F^\circ i^B_X(F^\circ B)$ and
$$F^\circ i^A_X(F^\circ A)\cap F^\circ i^B_X(F^\circ B)=F^\circ i^A_X(F^\circ A)=F^\circ i^{A\cap B}_X(F^\circ(A\cap B)).$$

2. By analogy we can consider the case $B\subset A$. These two cases happen if one of the sets $A$ or $B$ is empty.
\smallskip

3. The sets $A,B\subset X$ are non-empty but have empty intersection $A\cap B=\emptyset$. In this case $F^\circ A=FA$ and $F^\circ B=FB$. To prove that $Fi^A_X(FA)\cap Fi^B_X(FB)\subset F^\circ i^\emptyset_X(F^\circ\emptyset)$, fix any element $c\in Fi^A_X(FA)\cap Fi^B_X(FB)$.
We need to prove that $c\in F^\circ i^\emptyset_X(F^\circ\emptyset)$.
Find elements $c_A\in FA$ and $c_B\in FB$ such that $Fi^A_X(c_A)=c=Fi^B_X(c_B)$.

First we prove that for any point $a\in A$ we get $c\in Fi^{\{a\}}_X(F\{a\})\subset FX$. Indeed, consider the map $r:X\to A$ such that $r(x)=x$ if $x\in A$ and $r(x)=a$ if $x\in X\setminus A$. Let $r^B_{\{a\}}:B\to\{a\}$ denote the constant map and observe that $i^A_X\circ r\circ i^B_X=i^{\{a\}}_X\circ r^B_{\{a\}}$.

Applying the functor $F$ to the equality $i^A_X=i^A_X\circ r\circ i^A_X$, we get
$c=Fi^A_X(c_A)=Fi^A_X\circ Fr\circ Fi^A_X(c_A)=Fi^A_X\circ Fr(c)=Fi^A_X\circ Fr\circ Fi^B_X(c_B)\in F(i^A_X\circ r\circ i^B_X)(c_B)=
F(i^{\{a\}}_X\circ r^B_{\{a\}})(c_B)=Fi^{\{a\}}_X(Fr^B_{\{a\}}(c_B))\in Fi^{\{a\}}_X(F\{a\})\subset FX$.

By the same argument, we can prove that $c\in Fi^{\{b\}}_X(F\{b\})\subset FX$ for any $b\in B$.

Let $r^X_1:X\to 1$ be the unique map and $f_a,f_b:1\to X$ be two maps such that $f_a(0)=a\in A$ and $f_b(0)=b\in B$. Since $c\in Fi^{\{a\}}_X(F\{a\})=Ff_a(F1)$ and $c\in Fi^{\{b\}}_XF(\{b\})=Ff_b(F1)$ there are two elements $c_a,c_b\in F1$ such that $Ff_a(c_a)=c=Ff_b(c_b)$. Since $r^X_1\circ f_a=\id=r^X_1\circ f_b$, we conclude that
$$c_a=Fr^X_1\circ Ff_a(c_a)=Fr^X_1(c)=Fr^X_1\circ Ff_b(c_b)=c_b.$$ Now we see that the element $c_1=c_a=c_b$ belongs to $F^\circ\emptyset$ and $c=Ff_a(c_1)=Ff_b(c_1)$, which means that $c=F^\circ i^1_X\,(c_1)\in F^\circ i^\emptyset_X(F^\circ\emptyset)$ according to the definition of the morphism $F^\circ i^\emptyset_X:F^\circ\emptyset\to F^\circ\! X=FX$.
\smallskip

4. The intersection $A\cap B$ is not empty. In this case $F^\circ\! A=FA$, $F^\circ\! B=FB$ and $F^\circ (A\cap B)=F(A\cap B)$.

To prove that $Fi^A_X(FA)\cap Fi^B_X(FB)\subset Fi^{A\cap B}_X(F(A\cap B))$, fix any element $c\in Fi^A_X(FA)\cap Fi^A_X(FB)$ and find  elements $c_A\in FA$ and $c_B\in FB$ such that $Fi^A_X(c_A)=c=Fi^B_X(c_B)$.

Choose any map $r^X_{A\cap B}:X\to A\cap B$ such that $r(x)=x$ for all $x\in A\cap B$ and define retractions $r^X_A:X\to A$ and $r^X_B:X\to B$ by
$$
r^X_A(x)=
\begin{cases}
x&\mbox{if $x\in A$}\\
r^X_{A\cap B}(x)&\mbox{otherwise}
\end{cases}
\quad\mbox{ and }\quad
r^X_B(x)=
\begin{cases}
x&\mbox{if $x\in B$}\\
r^X_{A\cap B}(x)&\mbox{otherwise}.
\end{cases}
$$
Observe that $r^X_{A\cap B}=r^X_B\circ r^X_A=r^X_A\circ r^X_B$.

We claim that $c_A=Fr^X_A(c)$. Since $i^A_X=i^A_X\circ r^X_A\circ i^A_X$, we get
$$Fi^A_X(c_A)=Fi^A_X\circ Fr^X_A\circ Fi^A_X(c_A)=Fi^A_X\circ Fr^X_A(c)=Fi^A_X(Fr^X_A(c))$$and hence $c_A=Fr^A_X(c)$ by the injectivity of the map $Fi^A_X:FA\to FX$.

The same argument yields that $c_B=Fr^X_B(c)$.
Now consider the element $c_{AB}=Fr^X_{A\cap B}(c)\in F(A\cap B)$. Since $r^X_{A\cap B}=r^X_{A\cap B}\circ i^A_X\circ r^X_A$, we get $$c_{AB}=Fr^X_{A\cap B}(c)=Fr^X_{A\cap B}\circ Fi^A_X\circ Fr^X_A(c)=Fr^X_{A\cap B}\circ Fi^A_X(c_A).$$

Applying the functor $F$ to the equality $i^{A\cap B}_B\circ r^X_{A\cap B}\circ i^A_X=r^X_B\circ i^A_X$, we get
$$Fi^{A\cap B}_B(c_{AB})=Fi^{A\cap B}_B\circ Fr^X_{A\cap B}\circ Fi^A_X(c_A)=Fr^X_B\circ Fi^A_X(c_A)=Fr^X_B(c)=c_B$$and then
$$Fi^{A\cap B}_X(c_{AB})=F(i^B_X\circ i^{A\cap B}_B)(c_{AB})=Fi^B_X\circ Fi^{A\cap B}_B(c_{AB})=Fi^B_X(c_B)=c, $$ which means that $c=Fi^{A\cap B}_X(c_{AB})\in Fi^{A\cap B}_X(F(A\cap B))$.
\end{proof}

The following lemma implies Theorem~\ref{support}.

\begin{lemma}\label{l2} For any object $X$ of the category $\C$ and an element $a\in F^\circ X$ the support $A=\supp_X(a)$ is a well-defined finite subset of $X$ such that $a\in F^\circ i^A_X(F^\circ A)$.
\end{lemma}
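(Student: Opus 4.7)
The plan is to derive Lemma~\ref{l2} from Lemma~\ref{l1} by reducing the argument to a finite discrete subspace of $X$. Since $F$ has finite supports, the family
$$\mathcal{S}(a)=\{B\in[X]^{<\w}:a\in F^\circ i^B_X(F^\circ B)\}$$
is nonempty; fix some $A_0\in\mathcal{S}(a)$. The key claim is that $\mathcal{S}(a)$ is closed under pairwise intersection. Granted this, $\{A_0\cap B:B\in\mathcal{S}(a)\}$ is a finite family of subsets of $A_0$ which lies inside $\mathcal{S}(a)$ and is closed under intersection, so it has a smallest element $S$. For every $B\in\mathcal{S}(a)$ we then have $S\subset A_0\cap B\subset B$, giving $S\subset\bigcap\mathcal{S}(a)=\supp_X(a)$; the reverse inclusion follows from $S\in\mathcal{S}(a)$. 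Hence $\supp_X(a)=S$ is a well-defined finite subset of $X$, and $S\in\mathcal{S}(a)$ is exactly the required containment $a\in F^\circ i^S_X(F^\circ S)$.

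To establish pairwise intersection-closure, take $B_1,B_2\in\mathcal{S}(a)$ with witnesses $b_i^\circ\in F^\circ B_i$ satisfying $F^\circ i^{B_i}_X(b_i^\circ)=a$. Set $Y=B_1\cup B_2$, a finite discrete object of $\C$, and push the witnesses into $Y$ as
$$b_i=F^\circ i^{B_i}_Y(b_i^\circ)\in F^\circ i^{B_i}_Y(F^\circ B_i)\subset F^\circ Y.$$
Functoriality yields $F^\circ i^Y_X(b_1)=a=F^\circ i^Y_X(b_2)$, and since $i^Y_X$ is an injection from a finite discrete space (hence a monomorphism of $\C$), the monomorphicity of $F$ forces $b_1=b_2$ whenever $Y\ne\emptyset$. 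Lemma~\ref{l1}, applied inside the finite discrete space $Y$ to the subsets $B_1,B_2$, then produces $d\in F^\circ(B_1\cap B_2)$ with $F^\circ i^{B_1\cap B_2}_Y(d)=b_1$, and functoriality gives $a=F^\circ i^Y_X\circ F^\circ i^{B_1\cap B_2}_Y(d)=F^\circ i^{B_1\cap B_2}_X(d)$, so $B_1\cap B_2\in\mathcal{S}(a)$.

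The main obstacle is the $\emptyset$-modification bookkeeping in the degenerate cases. If one of $B_1,B_2$ is empty, say $B_2=\emptyset$, then $a\in F^\circ i^\emptyset_X(F^\circ\emptyset)$ already and $B_1\cap B_2=\emptyset\in\mathcal{S}(a)$ trivially; otherwise $Y\ne\emptyset$, so $F^\circ Y=FY$ and $F^\circ i^Y_X=Fi^Y_X$ is an honest injection, and Lemma~\ref{l1} already covers both empty and nonempty intersection cases. Once this case split is made, the remainder is pure diagram-chasing in $\C$ and $\T$.
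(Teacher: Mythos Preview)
Your argument is correct and follows essentially the same route as the paper: both proofs show that the family $\mathcal S(a)$ (the paper's $\mathcal B$) is nonempty and then use Lemma~\ref{l1} to pass to intersections of witnesses, thereby reaching the support. Your version is in fact slightly more careful: you explicitly push the two witnesses into the finite discrete space $Y=B_1\cup B_2$ before invoking Lemma~\ref{l1}, whereas the paper applies Lemma~\ref{l1} directly to subsets of the (possibly non-finite) object $X$ and leaves this reduction implicit.
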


\begin{proof} We recall that $\supp_X(a)=\cap\mathcal B$ where $\mathcal B=\{B\in[X]^{<\w}:a\in F^\circ i^B_X(F^\circ B)\}$. First we show that the family $\mathcal B$ is not empty.
Since the functor $F^\circ$ has finite supports, there is a map $f:C\to X$ from a finite discrete space $C$ such that $a\in F^\circ f(F^\circ C)$. Let $B=f(C)$ and $f^C_B:C\to B$ be the map such that $f^C_B(c)=f(c)$ for all $c\in C$. Since $f=i^B_X\circ f^C_B$, we get $F^\circ f=F^\circ i^B_X\circ F^\circ\! f^C_B$ and
$$a\in F^\circ f(F^\circ C)=F^\circ(i^B_X\circ f^C_B)(F^\circ C)=F^\circ i^B_X(F^\circ\! f^C_B(F^\circ C))\subset F^\circ i^B_X(F^\circ B).$$

Now we see that $B\in\mathcal B$ and the family $\mathcal B$ is not empty. So, the intersection $\supp(a)=\cap\mathcal B$ is a well-defined finite subset of $X$. Since $\supp(a)=\cap\mathcal B$ is finite, there exist subsets $B_1,B_2,\dots, B_n\in\mathcal B$ of $X$ such that $\supp(a)=\bigcap_{i=1}^n B_i$. For every $k\le n$ let $A_k=\bigcap_{i=1}^kB_i$.
Thus $A_1=B_1$ and $A_n=\supp(a)$.

We claim that $a\in F^\circ i^{A_k}_X(F^\circ A_k)$ for every $1\le  k\le n$. This will be done by induction on $k$. For $k=1$ this inclusion follows from $A_1=B_1$ and the choice of $B_1$. Assume that $a\in F^\circ i^{A_{k-1}}_X(F^\circ A_{k-1})$
for some $k\le n$. Taking into account that $A_k=A_{k-1}\cap B_k$ and $a\in F^\circ i^{B_k}_X(F^\circ B_k)$ and applying Lemma~\ref{l1}, we conclude that
$a\in F^\circ i^{A_{k-1}}_X(F^\circ A_{k-1})\cap
F^\circ i^{B_{k}}_X(F^\circ B_{k})=
F^\circ i^{A_{k}}_X(F^\circ A_{k})$.

For $k=n$ we get $A_n=\supp(a)$ and hence $a\in F^\circ i^{A_{n}}_X(F^\circ A_{n})$.
\end{proof}

\section{Proof of Theorem~\ref{main}}

Let $F:\Comp\to\T$ be a monomorphic functor with finite supports in the category of compacta and $F^\circ:\Comp\to\T$ be its maximal $\emptyset$-modification. By Theorem~\ref{t1}, the functor $F^\circ$ is monomorphic. Also it is clear that $F^\circ$ has finite supports. The remaining two properties of $F$ and $F^\circ$ stated in Theorem~\ref{main} are proved in the following two  lemmas.

\begin{lemma}\label{l3} Each monomorphic functor $F:\Comp\to\T$ with finite supports preserves surjective maps and hence is epimorphic.
\end{lemma}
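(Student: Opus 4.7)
The plan is to fix a continuous surjection $f:X\to Y$ in $\Comp$ together with an arbitrary $b\in FY$, and to produce some $a\in FX$ with $Ff(a)=b$. The driving tool is Theorem~\ref{support}: every element of $F^\circ Y$ carries a finite support. If $Y=\emptyset$ then also $X=\emptyset$ and $Ff$ is the identity of $F\emptyset$, so I may assume $Y\ne\emptyset$, which gives $FY=F^\circ Y$. Applying Theorem~\ref{support} to $b$ yields a finite support $B=\supp_Y(b)\subset Y$ and an element $b'\in F^\circ B$ with $F^\circ i^B_Y(b')=b$.

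First I would treat the generic case $B\ne\emptyset$. Using surjectivity of $f$, choose for every $y\in B$ a preimage $x_y\in f^{-1}(y)$ and set $A=\{x_y:y\in B\}\subset X$. Then $A$ is a finite discrete subspace of $X$, and the restriction $f|A:A\to B$ is a bijection of finite discrete spaces, hence an isomorphism in $\Comp$; consequently $F(f|A):FA\to FB$ is a bijection. Let $a'\in FA$ be the unique preimage of $b'$ under $F(f|A)$ and set $a=Fi^A_X(a')\in FX$. The identity $f\circ i^A_X=i^B_Y\circ(f|A)$ together with the functoriality of $F$ then forces
\[
Ff(a)=Ff\circ Fi^A_X(a')=Fi^B_Y\circ F(f|A)(a')=Fi^B_Y(b')=b.
\]

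The subtler case is $B=\emptyset$, in which $b=F^\circ i^\emptyset_Y(c)$ for some $c\in F^\circ\emptyset\subset F1$. Here I would invoke the explicit recipe for $F^\circ i^\emptyset_Y$ supplied by Theorem~\ref{t1}, namely that $F^\circ i^\emptyset_Y=Fg|F^\circ\emptyset$ for \emph{every} morphism $g:1\to Y$. Since $Y\ne\emptyset$ and $f$ is surjective, $X$ is also non-empty; pick any point $x\in X$ and let $h:1\to X$ be the constant map $h(0)=x$. Setting $a=Fh(c)\in FX$, the computation $Ff(a)=F(f\circ h)(c)=b$ applies because $f\circ h:1\to Y$ is one of the admissible choices of $g$. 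I expect this empty-support branch to be the main obstacle, since it is the only step that appeals to the specific internal description of $F^\circ\emptyset$ from Theorem~\ref{t1} rather than to pure functoriality. Once surjectivity of $Ff$ is in hand, the fact that epimorphisms in $\T$ and in $\Comp$ coincide with surjective maps immediately yields that $F$ is epimorphic.
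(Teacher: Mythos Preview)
Your argument is correct, but it is considerably more elaborate than the paper's.  The paper does not invoke Theorem~\ref{support} or the functor $F^\circ$ at all: given a surjection $f:X\to Y$ and $b\in FY$, it uses only the \emph{definition} of ``finite supports'' to pick some finite $B\subset Y$ (not the minimal support) with $b=Fi^B_Y(b_B)$ for some $b_B\in FB$, then chooses any section $s:B\to X$ with $f\circ s=i^B_Y$ and sets $a=Fs(b_B)$; functoriality gives $Ff(a)=b$.  This single three-line argument works uniformly, including when $B=\emptyset$ (then $s:\emptyset\to X$ is the unique map and $f\circ s=i^\emptyset_Y$ automatically), so no case split and no appeal to the description of $F^\circ\emptyset$ from Theorem~\ref{t1} is needed.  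Your route through the minimal support and the explicit formula for $F^\circ i^\emptyset_Y$ is valid, and it incidentally shows that the preimage $a$ can be taken with $|\supp_X(a)|\le|\supp_Y(b)|$, but for the bare statement of the lemma it is an unnecessary detour through the hardest result of the paper.
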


\begin{proof}  Let $f:X\to Y$ be a surjective map between compact spaces and $b\in FY$ be any element. Since $F$ has finite supports, there is a finite subset $B\subset Y$ such that $b\in Fi^B_Y(FB)$ where $i^B_Y:B\to Y$ is the identity map from $B$ to $Y$.  Let $s:B\to X$ be any map such that $f\circ s=i^B_Y$. Such a map $s$ exists because the map $f$ is surjective. Fix an element  $b_B\in FB$ such that $b=Fi^B_X(b_B)$ and let $a=Fs(b_B)$. Applying the functor $F$ to the equality $f\circ s=i^B_X$, we get $b=Fi^B_X(b_B)=Ff\circ Fs(b_B)=Ff(a)$, witnessing that the map $Ff:FX\to FY$ is surjective. Therefore $F$ is an epimorphic functor.
\end{proof}

\begin{lemma}\label{l4} The functor $F^\circ:\Comp\to\T$ preserves intersections.
\end{lemma}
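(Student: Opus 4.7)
The plan is to establish both inclusions of the equality
$$F^\circ i^Z_X(F^\circ Z) = \bigcap_{\alpha\in A} F^\circ i^{X_\alpha}_X(F^\circ X_\alpha).$$
The easy direction $\subset$ follows purely from functoriality: since each inclusion factors as $i^Z_X = i^{X_\alpha}_X \circ i^Z_{X_\alpha}$, applying $F^\circ$ and taking images yields containment in every $F^\circ i^{X_\alpha}_X(F^\circ X_\alpha)$.

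For the nontrivial inclusion $\supset$, I would start with an arbitrary element $a$ in the intersection on the right and apply Theorem~\ref{support} to produce its finite support $S=\supp_X(a)\subset X$ together with the factorization $a\in F^\circ i^S_X(F^\circ S)$. The crux is to prove that $S\subset X_\alpha$ for every $\alpha$. To do this, for each fixed $\alpha$ I would write $a=F^\circ i^{X_\alpha}_X(a_\alpha)$ with $a_\alpha\in F^\circ X_\alpha$, apply Theorem~\ref{support} inside the space $X_\alpha$ to obtain a finite set $S_\alpha\subset X_\alpha$ with $a_\alpha\in F^\circ i^{S_\alpha}_{X_\alpha}(F^\circ S_\alpha)$, and then use the factorization $i^{X_\alpha}_X\circ i^{S_\alpha}_{X_\alpha}=i^{S_\alpha}_X$ to conclude $a\in F^\circ i^{S_\alpha}_X(F^\circ S_\alpha)$. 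This places $S_\alpha$ in the defining family for $\supp_X(a)$, and hence $S\subset S_\alpha\subset X_\alpha$.

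Intersecting these containments over $\alpha$ gives $S\subset Z$, and the remaining step is to convert the factorization $a\in F^\circ i^S_X(F^\circ S)$ into $a\in F^\circ i^Z_X(F^\circ Z)$. If $Z$ is nonempty this is immediate from $i^S_X=i^Z_X\circ i^S_Z$ together with functoriality of $F^\circ$. If $Z=\emptyset$, then $S\subset Z$ forces $S=\emptyset$ as well, and the analogous factorization $i^\emptyset_X=i^Z_X\circ i^\emptyset_Z$ combined with the description of $F^\circ$ on empty spaces from Theorem~\ref{t1} yields the same conclusion.

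The main point to watch is the bookkeeping around the empty set: Theorem~\ref{support} and the construction of $F^\circ$ in Theorem~\ref{t1} were arranged precisely so that the cases $Z=\emptyset$ and $S=\emptyset$ (corresponding to elements factoring through $F^\circ\emptyset$) are absorbed by the same formal argument and do not require any ad hoc treatment. Beyond this, the proof is a routine support-chasing argument; no further compactness input is needed once Theorem~\ref{support} is available.
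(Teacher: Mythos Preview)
Your proposal is correct and follows essentially the same route as the paper: show that the element factors through a finite subset of each $X_\alpha$, conclude that its support lies in every $X_\alpha$ and hence in $Z$, and then use Theorem~\ref{support} to factor through $Z$. The only cosmetic differences are that the paper obtains the finite sets $Y_\alpha\subset X_\alpha$ directly from the finite-supports hypothesis rather than by a second appeal to Theorem~\ref{support}, and it does not separate out the case $Z=\emptyset$, since the functoriality of $F^\circ$ already absorbs it.
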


\begin{proof} Let $X$ be a compact Hausdorff space and $X_\alpha$, $\alpha\in A$, be closed subspaces of $X$ with intersection $Z=\bigcap_{\alpha\in A}X_\alpha$.
For two compact Hausdorff spaces $A\subset B$ by  $i^A_B:A\to B$ we denote the identity embedding.

We need to prove that $F^\circ i^Z_X(F^\circ Z)=\bigcap_{\alpha\in A}F^\circ i^{X_\alpha}_X(F^\circ X_\alpha)$. The inclusion $$F^\circ i^Z_X(F^\circ Z)\subset\bigcap_{\alpha\in A}F^\circ i^{X_\alpha}_X(F^\circ X_\alpha)$$ trivially follows from the functoriality of $F^\circ$.

In order to prove the reverse inclusion, fix any element $b\in\bigcap_{\alpha\in A}F^\circ i^{X_\alpha}_X(F^\circ \! X_\alpha)$. For every $\alpha\in A$ find an element $b_\alpha\in F^\circ X_\alpha$ such that $b=F^\circ i^{X_\alpha}_X(b_\alpha)$. Since the functor $F^\circ$ has finite supports, there is a finite set $Y_\alpha\subset X_\alpha$ such that $b_\alpha\in F^\circ i^{Y_\alpha}_{X_\alpha}(F^\circ Y_\alpha)$. Since $i^{Y_\alpha}_X=i^{X_\alpha}_X\circ i^{Y_\alpha}_{X_\alpha}$, we get
$$b=F^\circ i^{X_\alpha}_X(b_\alpha)\in F^\circ i^{X_\alpha}_X(F^\circ i^{Y_\alpha}_{X_\alpha}(F^\circ Y_\alpha))=F^\circ i^{Y_\alpha}_X(F^\circ Y_\alpha).$$

The definition of the set $A=\supp(b)$ guarantees that $A=\supp(b)\subset Y_\alpha\subset X_\alpha\subset X$. Then $A\subset\bigcap_{\alpha\in A}X_\alpha=Z$ and $i^A_X=i^Z_X\circ i^A_Z$. By Theorem~\ref{support}, $b\in F^\circ i^A_X(F^\circ A)$ and consequently, there is an element $a\in F^\circ A$ such that $b=F^\circ i^A_X(a)$. Let $c=F^\circ i^A_Z(a)\in F^\circ Z$. Then
$$b=F^\circ i^A_X(a)=F^\circ(i^Z_X\circ i^A_Z)(a)=F^\circ i^Z_X(F^\circ i^A_Z(a))=F^\circ i^Z_X(c)\in F^\circ i^Z_X(F^\circ Z),$$which completes the proof.
\end{proof}

\section{Proof of Theorem~\ref{main2}}

Let $F:\Comp\to\Comp$ be a monomorphic functor with finite supports in the category of compacta. By Theorem~\ref{main}, its maximal $\emptyset$-modification $F^\circ:\Comp\to\Comp$ is a monomorphic, epimorphic functor with finite supports, which preserves intersections.  The remaining two properties of $F^\circ$ stated in Theorem~\ref{main2} are proved in the following two lemmas.

\begin{lemma}\label{l5} Each monomorphic functor $F:\Comp\to\Comp$ with finite supports is continuous.
\end{lemma}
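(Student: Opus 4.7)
The plan is to show that for any inverse spectrum $(X_\alpha,p^\alpha_\beta)_{\alpha\in A}$ in $\Comp$ with limit $X$ and canonical projections $p_\alpha:X\to X_\alpha$, the continuous map $\phi:FX\to\varprojlim FX_\alpha$ defined by $\phi(a)=(Fp_\alpha(a))_\alpha$ is bijective; this suffices because both domain and codomain are compact Hausdorff.

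For injectivity, suppose $a,b\in FX$ satisfy $Fp_\alpha(a)=Fp_\alpha(b)$ for all $\alpha$. By Theorem~\ref{support}, $S=\supp(a)\cup\supp(b)$ is a finite subset of $X$ and one can write $a=F^\circ i^S_X(a_S)$, $b=F^\circ i^S_X(b_S)$ for some $a_S,b_S\in F^\circ S$. The projections of an inverse limit separate points, so since $S$ is finite and $A$ is directed, there exists $\alpha_0$ with $p_{\alpha_0}|_S$ injective. Then $p_{\alpha_0}\circ i^S_X$ is a monomorphism in $\Comp$, hence $F^\circ(p_{\alpha_0}\circ i^S_X)$ is injective; functoriality together with the hypothesis forces $a_S=b_S$, and thus $a=b$.

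For surjectivity, I first reduce to a surjective spectrum by replacing each $X_\alpha$ with the closed subspace $X'_\alpha=p_\alpha(X)$, which in the compact Hausdorff setting equals $\bigcap_{\beta\ge\alpha}p^\alpha_\beta(X_\beta)$, and for which the limit remains $X$. Given $(a_\alpha)\in\varprojlim FX_\alpha$, the compatibility $a_\alpha=Fp^\alpha_\beta(a_\beta)$ shows $a_\alpha\in Fp^\alpha_\beta(FX_\beta)$ for every $\beta\ge\alpha$; factoring $p^\alpha_\beta$ through its image and applying Lemma~\ref{l3} identifies this image with $F^\circ i^{p^\alpha_\beta(X_\beta)}_{X_\alpha}(F^\circ p^\alpha_\beta(X_\beta))$. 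Intersecting over $\beta$ and invoking Lemma~\ref{l4}, we obtain $a_\alpha\in F^\circ i^{X'_\alpha}_{X_\alpha}(F^\circ X'_\alpha)$, so $a_\alpha=F^\circ i^{X'_\alpha}_{X_\alpha}(a'_\alpha)$ for a unique $a'_\alpha\in F^\circ X'_\alpha$; injectivity of $F^\circ i^{X'_\alpha}_{X_\alpha}$ combined with functoriality shows that $(a'_\alpha)$ is compatible under the surjective bonding maps $(p^\alpha_\beta)|_{X'_\beta}$.

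In this reduced surjective spectrum the projections $p'_\alpha:X\to X'_\alpha$ are surjective, so by Lemma~\ref{l3} each $Fp'_\alpha$ is surjective and $E_\alpha=(Fp'_\alpha)^{-1}(a'_\alpha)$ is a non-empty closed subset of the compact space $FX$. Compatibility yields $E_\beta\subset E_\alpha$ whenever $\alpha\le\beta$, so the centered family $\{E_\alpha\}$ has non-empty intersection, and any $a$ in this intersection satisfies $\phi(a)=(a_\alpha)$. The main obstacle is the surjectivity reduction: it requires carefully combining the intersection-preservation of $F^\circ$ (Lemma~\ref{l4}) with the epimorphism-preservation of $F$ (Lemma~\ref{l3}), together with the standard compact Hausdorff identity $p_\alpha(X)=\bigcap_{\beta\ge\alpha}p^\alpha_\beta(X_\beta)$, in order to transport the given compatible family from the original spectrum to one over the surjective subspectrum.
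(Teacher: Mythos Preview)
Your argument is correct (modulo the trivially handled case $X=\emptyset$, where in $\Comp$ a cofinal set of $X_\alpha$ must be empty), but it follows a genuinely different route from the paper's. The paper invokes a black-box criterion (Theorem~2.2.2 of \cite{TZ}) which reduces continuity for an epimorphic functor to a separation property on Tychonoff cubes: distinct $a,b\in F(\II^\kappa)$ are separated by some finite-face projection $Fp_D$. Verifying that criterion amounts to exactly the injectivity half of your argument, specialised to the spectrum of finite faces of $\II^\kappa$; the surjectivity half is absorbed into the cited theorem. Thus the paper's proof uses only Lemma~\ref{l3}, monomorphicity, and finite supports, together with the external reference.

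Your proof, by contrast, is self-contained within the paper: you verify preservation of arbitrary inverse limits directly, and to do so you bring in the full strength of Theorem~\ref{support} and Lemma~\ref{l4} (intersection preservation of $F^\circ$) for the surjectivity reduction. What this buys is independence from \cite{TZ} and a nice illustration of how the support machinery and intersection preservation work together; the cost is a longer argument and reliance on more of the paper's own lemmas than are strictly necessary. Both approaches ultimately hinge on the same finite-support idea for injectivity; the real difference is whether surjectivity is handled by citation or by the explicit compactness-and-intersection argument you give.
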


\begin{proof} By Lemma~\ref{l3}, $F$ is epimorphic. By Theorem~2.2.2 of \cite{TZ} the continuity of the functor $F$ will follow as soon as we check that for each cardinal $\kappa$ and any two distinct elements $a,b\in F(\II^\kappa)$ there is a finite subset $D\subset\kappa$ such that $F p_D(a)\ne F p_D(b)$ where $p_D:\II^\kappa\to \II^D$ is the projection of the Tychonov cube $\II^\kappa$ onto its face $\II^D$.

Since $F$ has finite supports, there is a finite subset $C\subset \II^\kappa$ such that $a,b\in Fi^C(FC)$ where $i^C:C\to \II^\kappa$ denotes the identity embedding. Find elements $a_C,b_C\in F C$ such that $a=F i^C(a_C)$ and $b=F i^C(b_C)$. Since $C$ is finite, we can find a finite subset $D\subset\kappa$ such that the composition $p_D\circ i^C:C\to I^D$ is injective. Since $F$ is monomorphic, the map $F p_D\circ F i^C:FC\to F \II^D$ is injective and hence
$$F p_D(a)=F p_D\circ F i^C(a_C)\ne F p_D\circ F i^C(b_C)=F p_D(b).$$
\end{proof}

For a topological space $X$ by $w(X)$ we denote its weight (equal to the smallest cardinality of the base of the topology of $X$). For two compact Hausdorff spaces $X,Y$ by $C(X,Y)$ we denote the space of continuous functions from $X$ to $Y$, endowed with the compact-open topology.

\begin{lemma}\label{l2} If $F:\Comp\to\Comp$ is a monomorphic functor with finite supports, then $w(FX)\le\sup\{w(X),w(Fn):n\in\w\}$ for each infinite compact space $X$.
\end{lemma}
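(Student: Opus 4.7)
The plan is to combine the continuity of $F$ (Lemma~\ref{l5}) with the support representation of Theorem~\ref{support} to reduce the weight estimate first to the metrizable case and then to a countable union of compact subsets of $FY$ of controlled weight. Set $\tau=\sup\{w(X),w(Fn):n\in\w\}$; since $X$ is infinite, $\tau\ge\aleph_0$.

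First I would embed $X$ into $\II^{w(X)}$ and use the projections to finite-coordinate cubes to write $X$ as the limit of an inverse spectrum $\{X_\alpha,p^\beta_\alpha\}_{\alpha\in A}$ of metrizable compacta with $|A|\le w(X)$. Lemma~\ref{l5} gives $FX=\varprojlim FX_\alpha$, and the standard estimate for the weight of an inverse limit in $\Comp$ then yields $w(FX)\le|A|+\sup_\alpha w(FX_\alpha)\le w(X)+\sup_\alpha w(FX_\alpha)$. So it suffices to prove $w(FY)\le\tau$ for every metrizable compactum $Y$.

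For such $Y$ I consider, for each $n\ge 1$, the map $\Phi_n:Y^n\times Fn\to FY$, $(f,a)\mapsto Ff(a)$, where $f:n\to Y$ is treated as a morphism from the discrete $n$-point space. By Theorem~\ref{support} applied to $F=F^\circ$ on the nonempty $Y$, every element $a\in FY$ with nonempty $\supp_Y(a)$ lies in $\Phi_{|\supp_Y(a)|}(Y^{|\supp_Y(a)|}\times F|\supp_Y(a)|)$ via a bijection from $|\supp_Y(a)|$ to $\supp_Y(a)$, while elements with empty support lie in $F^\circ i^\emptyset_Y(F^\circ\emptyset)\subseteq\Phi_1(Y\times F1)$ by the construction in Theorem~\ref{t1}. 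Hence $FY=\bigcup_{n\ge 1}\Phi_n(Y^n\times Fn)$. Granted that each $\Phi_n$ is continuous, each image is a closed subspace of $FY$ of weight at most $w(Y^n\times Fn)=\max(w(Y),w(Fn))\le\tau$, so $FY$ is a countable union of closed subspaces of network weight $\le\tau$, which gives $w(FY)=nw(FY)\le\aleph_0\cdot\tau=\tau$ by the equality $w=nw$ on compact Hausdorff spaces.

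The main obstacle is thus the continuity of $\Phi_n$, equivalently the continuity of the morphism action $Y^n\cong C(n,Y)\to C(Fn,FY)$, $f\mapsto Ff$, in the compact-open topologies (from which $\Phi_n$ is recovered by composing with the jointly continuous evaluation $C(Fn,FY)\times Fn\to FY$, available because $Fn$ is compact). This morphism-continuity is a standard feature of continuous monomorphic functors on $\Comp$ (cf.\ \cite{TZ}), and can be extracted by writing $Y=\varprojlim Y_k$ as the limit of a compatible inverse spectrum of metrizable factors, using $Y^n=\varprojlim Y_k^n$, and transferring convergence from the $FY_k$ to $FY$ via the inverse-limit representation $FY=\varprojlim FY_k$ supplied by Lemma~\ref{l5}.
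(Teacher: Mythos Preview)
Your proof is correct and rests on the same core idea as the paper's: use the continuity of the morphism action $C(n,X)\to C(Fn,FX)$ (which both you and the paper attribute to \cite{TZ}, the paper citing Theorem~2.2.3 there) to make the evaluation maps $\Phi_n$ (the paper writes $\xi_n$) continuous, so that $FX$ decomposes as a countable union of compact images of $C(n,X)\times Fn$ and the weight bound follows from standard estimates. The paper, however, carries this out directly for an arbitrary infinite compactum $X$, with no preliminary inverse-limit reduction to the metrizable case, and it reads off the decomposition $FX=\bigcup_{n}\xi_n(C(n,X)\times Fn)$ immediately from the definition of finite supports rather than via Theorem~\ref{support}. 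Your detours are harmless but unnecessary: once $\Phi_n$ is continuous and $C(n,X)\cong X^n$ is compact, the weight bound on each image and the countable-union network-weight argument already work for every infinite $X$, so the metrizable reduction and the appeal to supports buy nothing extra.
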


\begin{proof} By Lemmas~\ref{l3} and \ref{l5}, the functor $F$ is epimorphic and continuous. Then by Theorem 2.2.3 of \cite{TZ}, for every $n\in\w$ the map
$$F:C(n,X)\to C(Fn,FX),\;\;F:f\mapsto Ff,$$
is continuous and so is the map $$\xi_n:C(n,X)\times Fn\to FX,\;\;\xi_n:(f,a)\mapsto Ff(a),$$
according to the exponential law for the compact-open topology \cite[3.4.8]{En}. Then the image $F_nX=\xi_n(C(n,X)\times Fn)\subset FX$ is a compact space of weight
$$w(F_nX)\le w(C(n,X)\times Fn)\le \max\{w(X^n),w(Fn)\}=\max\{w(X),w(Fn)\},$$
see \cite[3.1.22]{En}.

Since $F$ has finite supports, the compact space $FX$ is equal to the countable union $FX=\bigcup_{n\in\w}F_nX$ and hence has weight
$$w(FX)\le\sup_{n\in\w}w(F_nX)\le\sup\{w(X),w(Fn):n\in\w\}$$
according to \cite[3.1.20]{En}.
\end{proof}

\end{document}